\documentclass[12pt]{amsart}
\usepackage{amstext,amsfonts,amssymb,amscd,amsbsy,amsmath,verbatim}
\usepackage[alphabetic,backrefs,lite]{amsrefs} 
\usepackage{ifthen, fullpage, extarrows}
\usepackage{color,tikz}
\usepackage{amsthm}
\usepackage{latexsym}
\usepackage[all]{xy}
\usepackage{enumerate}

\newtheorem{lemma}{Lemma}[section]

\newtheorem{prop}[lemma]{Proposition}
\newtheorem{question}[lemma]{Question}
\newtheorem{cor}[lemma]{Corollary}

\newtheorem{claim*}{Claim}
\newtheorem{thm}[lemma]{Theorem}

\theoremstyle{remark}


\newcommand{\Def}{\operatorname{Def}}

\newcommand{\im}{\operatorname{im}}

\newcommand{\Hom}{\operatorname{Hom}} 
\newcommand{\Ext}{\operatorname{Ext}} 

\newcommand{\kk}{\Bbbk}

\newcommand{\PP}{\mathbb{P}}
\renewcommand{\AA}{\mathbb{A}}
\newcommand{\GG}{\mathbb{G}}

\newcommand{\ZZ}{\mathbb{Z}}

\newcommand{\cO}{\mathcal{O}}


\newcommand{\defi}[1]{\textsf{#1}} 

\def\reg{\operatorname{reg}}


\begin{document}
\title{Murphy's Law for Hilbert function strata in the Hilbert scheme of points}
\author{Daniel Erman}
\date{\today}
\maketitle
\begin{abstract}
An open question is whether the Hilbert scheme of points of a high dimensional affine space satisfies Murphy's Law, as formulated by Vakil.  In this short note, we instead consider the loci in the Hilbert scheme parametrizing punctual schemes with a given Hilbert function, and we show that these loci satisfy Murphy's Law.  We also prove a related result for equivariant deformations of curve singularities with $\mathbb G_m$-action.
\end{abstract}

\section{Introduction}
It remains wide open whether the Hilbert scheme of points in $\AA^n$ satisfies Murphy's Law, in the precise sense introduced in~\cite[p. 2]{vakil}.  Vakil's version of Murphy's Law is based on the notion of \defi{smooth equivalence}, which is the equivalence on local rings generated by the relations $R\sim S$ whenever there exists a smooth map of local rings $R\to S$.  We say that $R$ is a \defi{singularity of finite type over $\mathbb Z$} if $R$ is the completion of a ring of finite type over $\mathbb Z$ at some prime ideal.   motivating question

\begin{question}\label{question}
Up to smooth equivalence, does any singularity of finite type over $\ZZ$ occur on some Hilbert scheme of points in $\AA^n$?
\end{question}
\noindent In fact, beyond Iarrobino's foundational reducibility results~\cite{iarrobino-red}, little is known about the singularities that occur on Hilbert schemes of points in $\AA^n$.

Inside of the Hilbert scheme of points in $\AA^n$, there are various loci parametrizing the homogeneous ideals with a given Hilbert function.  These strata, or variants thereof (including Gr\"obner strata, equivariant Hilbert schemes, etc.) have arisen in previous work on Hilbert schemes~\cite{cevv,evain,lederer}.  We refer to these refined parameter spaces as \defi{punctual Hilbert function strata}, and our main result is a Murphy's Law for these strata.    
\begin{thm}\label{thm}
Up to smooth equivalence, any singularity of finite type over $\ZZ$ occurs on some punctual Hilbert function strata.
\end{thm}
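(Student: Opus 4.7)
My strategy has two parts: first realize any singularity of finite type over $\ZZ$ on the $\GG_m$-equivariant deformation space of a reduced reducible curve in $\AA^n$, and then transport that singularity to a punctual Hilbert function stratum via truncation.

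\textbf{Step 1 (Configurations to equivariant curves).} By the Mn\"ev--Vakil universality theorem for realization spaces of configurations of points in $\PP^{n-1}$ with prescribed collinearity conditions, any singularity $R$ of finite type over $\ZZ$ appears, up to smooth equivalence, on such a configuration moduli space $X$. Passing to affine cones, each configuration $\{p_1,\ldots,p_k\}\subset \PP^{n-1}$ yields a reduced $\GG_m$-equivariant curve $C \subset \AA^n$ consisting of the union of the $k$ lines through the origin over the $p_i$. Graded-ideal / $\operatorname{Proj}$ duality identifies $\GG_m$-equivariant deformations of $C$ in $\AA^n$ with deformations of the configuration in $\PP^{n-1}$, so the equivariant deformation space $\Def^{\GG_m}(C)$ is smoothly equivalent to $X$ near the chosen configuration. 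This is essentially the companion result announced in the abstract.

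\textbf{Step 2 (Equivariant curves to Hilbert function strata).} Let $I_C \subset k[x_1,\ldots,x_n]$ be the graded ideal of $C$, let $N$ exceed the Castelnuovo--Mumford regularity of $I_C$, and set $J = I_C + \m^N$. Then $J$ is a $\GG_m$-invariant Artinian homogeneous ideal with some Hilbert function $h$, and truncation defines a map
\[
\Def^{\GG_m}(I_C) \to \Hilb^h, \qquad I_A \mapsto I_A + \m^N A[x_1,\ldots,x_n].
\]
I would show this is a smooth equivalence of formal neighborhoods. Any deformation $J_A$ of $J$ in $\Hilb^h$ necessarily contains $\m^N A[x]$ (because $h_j = 0$ for $j \geq N$), and the ``curve part'' of $J_A$ is recovered as the saturation $(J_A : \m^\infty)$, yielding a $\GG_m$-equivariant deformation of $I_C$. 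For $N$ above the regularity, the remaining freedom of $J_A$ beyond its curve part contributes only a smooth direct factor to $\Hilb^h$ at $J$. Combining with Step 1, the formal neighborhood of $J$ in $\Hilb^h$ is smoothly equivalent to $X$ at the chosen configuration, and hence carries $R$.

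\textbf{Main obstacle.} The crux is Step 2, namely verifying the claimed smooth equivalence rather than a mere set-theoretic identification. One must check that the tangent and obstruction theory of $\Hilb^h$ at $J$ matches that of $\Def^{\GG_m}(I_C)$ up to a smooth factor, and that $(J_A : \m^\infty)$ is flat over the deformation base whenever $J_A$ is. Both properties should follow from regularity bounds combined with graded flatness, but they require explicit verification in the equivariant graded setting; I expect this verification to be the technical heart of the paper, with Step 1 being a repackaging of preexisting Mn\"ev/Vakil technology.
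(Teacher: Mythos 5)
Your Step 2 is, in substance, the paper's key technical result (Proposition~\ref{prop:muisomorphism}): for $N$ above the regularity one truncates by $\m^N$ and checks that the truncation map of equivariant deformation functors is an isomorphism, which the paper carries out by comparing minimal free resolutions of $I$ and $I+\m^N$ (isomorphism on degree-zero first-order deformations, injectivity on obstructions), rather than by your proposed saturation argument; that part of your plan is sound and is indeed the technical heart of the argument.

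The genuine gap is Step 1. Mn\"ev--Vakil universality gives pathological \emph{realization spaces} of point configurations with prescribed collinearities, but the $\GG_m$-equivariant embedded deformations of the cone $C$ over such a configuration are not governed by that realization space, and no ``graded-ideal/Proj duality'' supplies this identification. Equivariant embedded deformations fix only the Hilbert function of the graded ideal, and the Hilbert function does not record which specific triples are collinear: for a large configuration an individual collinearity typically does not alter the Hilbert function at all, and even when it does, the Hilbert function only constrains dimensions of linear systems, never the particular incidences that the Mn\"ev construction needs. Put differently, the ordinary embedded deformations of a reduced set of distinct points in $\PP^{n-1}$ are unobstructed (each point simply moves in the smooth ambient space), so all of the hoped-for pathology would have to be cut out by the Hilbert-function condition alone, and there is no argument that this reproduces the realization space up to smooth equivalence. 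This is exactly why the paper does not use configurations: it starts from Vakil's smooth surfaces $X$ with $\omega_X$ very ample and $H^1(X,\cO_X)=0$, whose \emph{abstract} deformation spaces already realize any prescribed singularity, imposes the vanishing and projective normality conditions (i)--(iii) so that abstract and embedded deformations agree up to smooth equivalence, invokes the Piene--Schlessinger comparison theorem to pass to $\GG_m$-equivariant embedded deformations of the affine cone $CX\subseteq\AA^n$, and only then truncates as in your Step 2. Note also that the curve statement (Corollary~\ref{cor:curves}) is \emph{deduced} in the paper from this surface construction by cutting with a regular sequence of two general forms of large degree, so it cannot be cited as an independent input to your Step 1.
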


The idea behind this theorem is as follows.  We fix a surface $X\subseteq \mathbb P^{n-1}$ with a pathological deformation space as constructed in~\cite[\S 4]{vakil}.  We then consider the affine cone $CX\subseteq \mathbb A^{n}$ over $X$ and define a $0$-dimensional scheme $\Gamma$ by taking a large infinitesimal neighborhood of the cone point of $CX$.  In the language of ideals, we set $I_\Gamma:=I_X+\mathfrak m^m$, where $I_X$ is the ideal of $CX$, $\mathfrak m$ is the ideal of the cone point, and $m$ is much larger than the Castelnuovo-Mumford regularity of $X$.  Lastly, we use a syzygetic argument to relate the $\GG_m$-equivariant deformations of $\Gamma$ with embedded deformations of $X\subseteq \mathbb P^{n-1}$, obtaining the main result.

Beyond illustrating the unbounded pathologies of these strata, this theorem also suggests---but does not imply---an affirmative answer to Question~\ref{question}.  

Our techniques also provide a closely related result for the equivariant deformation spaces of curve singularities with a $\GG_m$-action.
\begin{cor}\label{cor:curves}
Up to smooth equivalence, any singularity of finite type over $\ZZ$ occurs on the $\GG_m$-equivariant deformation space of some curve singularity with a $\GG_m$-action.
\end{cor}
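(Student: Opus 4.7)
The plan is to bootstrap from Theorem~\ref{thm} by upgrading the $0$-dimensional scheme $\Gamma$ used there to a $1$-dimensional singular scheme while controlling its equivariant deformation theory. Let $A := k[x_1, \ldots, x_n]/(I_X + \m^m)$ be the graded Artinian algebra produced in the proof of Theorem~\ref{thm}, so that $\Gamma = \Spec A$ is the $0$-dimensional scheme whose punctual Hilbert function stratum realizes any given singularity of finite type over $\ZZ$ up to smooth equivalence. Introduce a new variable $t$ in positive degree and form the polynomial extension $R := A[t]$, giving a graded $1$-dimensional ring whose spectrum $C := \Spec R$ is a curve singularity with a $\GG_m$-action in the sense of the corollary.

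The main step is to show that the $\GG_m$-equivariant deformation space of $C$ is smoothly equivalent to the punctual Hilbert function stratum of $\Gamma$ from Theorem~\ref{thm}. Because $t$ is a regular element on $R$ with $R/(t) = A$ and $R$ is obtained from $A$ by polynomial extension, the forgetful map from equivariant deformations of $R$ to equivariant deformations of $A$ should be formally smooth: every equivariant deformation of $A$ lifts to a polynomial-extension deformation of $R$, and every equivariant flat deformation of $R$ restricts modulo a chosen lift of $t$ to a deformation of $A$, with the choice of lift contributing only a smooth factor. A further smooth factor then relates abstract $\GG_m$-equivariant deformations of $A$ to equivariant embedded deformations of $\Gamma \subseteq \AA^n$, namely the equivariant automorphism group of $\AA^n$ acting on the embedding.

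The main obstacle is verifying these smoothness statements rigorously; the key technical task is a syzygetic computation (in the spirit of the one behind Theorem~\ref{thm}) showing that the equivariant $T^1$ and $T^2$ of $R$ differ from those of $A$ only by free contributions coming from the new variable $t$. Once this smoothness is in hand, Theorem~\ref{thm} immediately transfers the desired pathological singularities from the punctual Hilbert function stratum of $\Gamma$ to the $\GG_m$-equivariant deformation space of $C$, yielding the corollary.
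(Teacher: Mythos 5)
There is a genuine gap, and it occurs at the heart of your construction. First, $C=\Spec A[t]=\Gamma\times\AA^1$ is not a curve singularity in the sense of the corollary: since $A=S/(I_X+\m^m)$ is a nonreduced Artinian ring, $C$ is nonreduced (hence singular) along the entire line, so it has no isolated singular point, whereas the corollary concerns (and the paper's Section 4 explicitly requires) an affine curve with a $\GG_m$-action and an \emph{isolated} singularity. Second, and more fundamentally, your key technical claim---that the equivariant $T^1$ and $T^2$ of $R=A[t]$ differ from those of $A$ only by free contributions from $t$---fails. The $\GG_m$-equivariant (degree~$0$) tangent space of the embedded deformations of $C\subseteq\AA^{n+1}$ is
\[
\Hom_{S[t]}\bigl(I_\Gamma\cdot S[t],\,(S/I_\Gamma)[t]\bigr)_0\;\cong\;\bigoplus_{j\geq 0}\Hom_S\bigl(I_\Gamma,S/I_\Gamma\bigr)_{-j\deg t}\,t^{\,j},
\]
so every \emph{negative}-degree first-order deformation of the fat point $\Gamma$ reappears in degree $0$ after twisting by a power of $t$. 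Since $I_\Gamma\supseteq\m^m$, there are many such negative-degree deformations (they include the directions that smooth the punctual scheme), and their obstruction behavior has nothing to do with the Hilbert function stratum $\mathcal H_\Gamma$. So the asserted smooth equivalence between $\Def^{\GG_m}_{C}$ and $\mathcal H_\Gamma$ does not follow, and is not plausible as stated.

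For comparison, the paper avoids the product construction entirely: it starts from the ideal $I_X$ of the surface cone (codimension $n-3$) and cuts by a regular sequence $g_1,g_2$ of \emph{general} forms of degree $m\geq\reg(I_X)+2$, setting $I_C=I_X+\langle g_1,g_2\rangle$. This $C$ is a cone over finitely many points, hence a genuine curve with an isolated singularity and a $\GG_m$-action. The same syzygetic argument as in Proposition~\ref{prop:muisomorphism} gives a map $\mathcal H_X\to\mathcal H_C$, and because the pair $(g_1,g_2)$ varies in an open subset of $S_m\oplus S_m$, this choice only contributes a smooth factor, so $\mathcal H_C$ is smoothly equivalent to $\mathcal H_X$ and hence to $R$. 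Finally, the embedded-versus-abstract comparison $\Def^{\GG_m}_{C|\AA^n}\to\Def^{\GG_m}_C$ is handled by (a graded version of) Artin's smoothness result, which is the one ingredient your proposal shares with the paper. If you want to salvage your approach, you would need to replace $\Gamma\times\AA^1$ by a construction that kills the negative-degree deformations of $\Gamma$ in the comparison---which is exactly what cutting the higher-dimensional cone by a general regular sequence of high degree accomplishes.
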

The paper is organized as follows.  
In \S\ref{sec:defns}, we review the deformation functors that will be used throughout.  In \S\ref{sec:compthms}, we prove Theorem~\ref{thm}, and in \S\ref{sec:examples} we prove Corollary~\ref{cor:curves}.
\section{Review of Deformation Functors}\label{sec:defns}
Throughout, $\kk$ denotes an arbitrary field.  We mostly follow the notation of \cite[\S2.4]{sernesi}.  For an algebraic scheme $Y$, we use $\Def_Y$ to denote the functor from Artin rings to sets which sends an Artinian ring $A$ to the set of deformations of $Y$ over $A$.
Similarly, for a closed subscheme $Y\subseteq Z$ we use $\Def_{Y|Z}$ to denote the functor of embedded deformations. 

If we consider $\mathbb A^{n}_{\kk}$ together with the $\GG_m$-action of dilation, then this leads to a special case of the multigraded Hilbert scheme~\cite{haiman-sturmfels} where we give the polynomial ring $S=\kk[x_1, \dots, x_n]$ the standard grading, and where we study families of graded ideals with a fixed Hilbert function $h$.  If $X\subseteq \mathbb A^n$ is $\GG_m$-invariant, then we use $\Def^{\GG_m}_{X|\mathbb A^{n}}$ to stand for the functor of $\GG_m$-invariant embedded deformations of $X$, and we use $\mathcal H_X$ for the local ring at the point of the multigraded Hilbert scheme defined by $X\subseteq \mathbb A^n$.

With this notation, Theorem~\ref{thm} amounts to the claim that, for any singularity $R$ of finite type of $\ZZ$, there exists an $n$ and a graded ideal $I_{\Gamma}\subseteq \kk[x_1, \dots, x_n]$ such that $\kk[x_1,\dots,x_n]/I_{\Gamma}$ is $0$-dimensional and such that $\mathcal H_{\Gamma}$ is in the same smooth equivalence class as $R$.

\section{Punctual Hilbert function strata}\label{sec:compthms}
We begin with a result comparing the embedded deformation theory of $Y\subseteq \mathbb P^{n-1}$ with the deformation functors that will be essential to the proof of our main result.  Throughout this section, we set $S:=\kk[x_1, \dots, x_n]$ and $\mathfrak m=\langle x_1, \dots, x_n\rangle\subseteq S$.
\begin{prop}\label{prop:muisomorphism}
Let $Y\subseteq \mathbb P^{n-1}$ be a projective scheme defined by the ideal $I_Y\subseteq S$. Let $\Gamma\subseteq \mathbb A^n$ be the $0$-scheme defined by by $I_\Gamma=I_{Y}+\mathfrak m^m$ for any $m\geq \reg(I_Y)+2$.
The functors $\Def^{\GG_m}_{CY|\mathbb A^{n}}$ and $\Def^{\GG_m}_{\Gamma | \mathbb A^{n}}$ are isomorphic.
\end{prop}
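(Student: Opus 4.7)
The plan is to construct mutually inverse natural transformations
$\Phi \colon \Def^{\GG_m}_{CY|\AA^n} \to \Def^{\GG_m}_{\Gamma|\AA^n}$ and $\Psi$ in the opposite direction. Write $r = \reg(I_Y)$; by definition $I_Y$ is generated in degrees $\leq r$ and its first syzygy module is generated in degrees $\leq r+1$. The hypothesis $m \geq r+2$ ensures $r+1 < m$, so $(I_Y)_d = (I_\Gamma)_d$ for every $d < m$, and moreover $(\tilde I_\Gamma)_d = (S_A)_d$ for every $d \geq m$ (as a free $A$-submodule of $(S_A)_d$ of full rank).

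Fix an Artin local $\kk$-algebra $A$. Define $\Phi$ by $\tilde I_Y \mapsto \tilde I_Y + \mathfrak m^m S_A$; graded piece by graded piece this equals either $(\tilde I_Y)_d$ (for $d < m$) or $(S_A)_d$ (for $d \geq m$), so the result is $A$-flat and reduces to $I_\Gamma$ modulo $\mathfrak m_A$. In the other direction, define $\Psi(\tilde I_\Gamma)$ to be the graded ideal generated by $(\tilde I_\Gamma)_{\leq r}$. The identity $\Psi \circ \Phi = \id$ follows from graded Nakayama: since $I_Y$ is generated in degrees $\leq r$, applying Nakayama to the quotient $\tilde I_Y/\langle (\tilde I_Y)_{\leq r}\rangle$ in each graded piece shows every graded flat deformation $\tilde I_Y$ is likewise generated by $(\tilde I_Y)_{\leq r}$, and $\mathfrak m^m$ contributes nothing in those degrees. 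For $\Phi \circ \Psi = \id$, choose lifts $g_i \in (\tilde I_\Gamma)_{d_i}$ of the minimal generators of $I_Y$ (possible by the freeness of $(\tilde I_\Gamma)_{d_i}$), and form $\phi \colon \bigoplus_i S_A(-d_i) \to S_A$. For every $d<m$, Nakayama applied to $\phi_d$ (whose mod-$\mathfrak m_A$ reduction surjects onto $(I_\Gamma)_d = (I_Y)_d$ by regularity) yields $\im(\phi)_d = (\tilde I_\Gamma)_d$, so $\Psi(\tilde I_\Gamma)$ agrees with $\tilde I_\Gamma$ in degrees $<m$, and adding $\mathfrak m^m$ fills in the rest.

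The main obstacle is showing that $\Psi(\tilde I_\Gamma)$ is genuinely an $A$-flat deformation of $I_Y$, since in degrees $d\geq m$ it no longer coincides with $\tilde I_\Gamma$. For this I would lift the first syzygies. Because $(\tilde I_\Gamma)_{r+1}$ is a free $A$-summand of $(S_A)_{r+1}$, the surjection $\phi_{r+1}\colon \bigoplus_i (S_A)_{r+1-d_i} \twoheadrightarrow (\tilde I_\Gamma)_{r+1}$ between free $A$-modules has free kernel of the expected rank $\dim_\kk \Syz^1(I_Y)_{r+1}$. Inside this kernel we lift generators of $\Syz^1(I_Y)$ (which live in degrees $\leq r+1$ by regularity) to build an $S_A$-linear map $\tilde\partial\colon \bigoplus_j S_A(-e_j) \to \bigoplus_i S_A(-d_i)$ with $e_j \leq r+1$ and $\phi\circ\tilde\partial$ vanishing on generators; since $\phi\circ\tilde\partial$ is $S_A$-linear, it then vanishes in every degree. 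The resulting two-term complex
\[
\bigoplus_j S_A(-e_j) \xrightarrow{\tilde\partial} \bigoplus_i S_A(-d_i) \xrightarrow{\phi} S_A \to S_A/\Psi(\tilde I_\Gamma) \to 0
\]
reduces modulo $\mathfrak m_A$ to an exact sequence, namely the initial part of the minimal free resolution of $S/I_Y$. Applying the local criterion of flatness degree by degree to $S_A/\Psi(\tilde I_\Gamma)$ then forces $\Tor_1^A(S_A/\Psi(\tilde I_\Gamma),\kk) = 0$, so $\Psi(\tilde I_\Gamma)$ is $A$-flat, as required.
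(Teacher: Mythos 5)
Your proposal is correct, but it takes a genuinely different route from the paper. You build an explicit inverse: $\Phi$ adds $\mathfrak m^m$, and $\Psi$ truncates to degrees $\leq \reg(I_Y)$ and regenerates, with the crucial flatness of $\Psi(\tilde I_\Gamma)$ established by the standard ``lift the generators and the first syzygies'' criterion --- this works precisely because $m\geq \reg(I_Y)+2$ puts all minimal generators (degrees $\leq r$) and first syzygies (degrees $\leq r+1$) in the range where $\tilde I_\Gamma$ has free graded pieces of the same rank as a deformation of $I_Y$, so both lift by Nakayama and the split exactness of $0\to K_d\to \bigoplus_i (S_A)_{d-d_i}\to (\tilde I_\Gamma)_d\to 0$. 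The paper instead constructs only the forward map $I_Y'\mapsto I_Y'+\mathfrak m^m$ and proves it is an isomorphism of functors by the tangent--obstruction criterion (cf.\ Pinkham): it computes first-order deformations as $\Hom_S(-,S/-)_0$ and obstructions as $\Ext^1_S(-,S/-)_0$ using the minimal free resolutions of $I_Y$ and of $I_\Gamma$ (the latter obtained by adjoining $S(-m)^{t_1}\to S(-m-1)^{t_2}\to\cdots$), and checks bijectivity, resp.\ injectivity, by degree bookkeeping. Your argument is more elementary and self-contained --- it avoids the deformation-theoretic formalism and gives a bijection on $A$-points for every Artin ring at once (naturality of $\Psi$ is then automatic, being the inverse of the natural map $\Phi$) --- at the cost of the explicit flatness verification; the paper's argument is shorter once the formalism is granted, concentrating the work in two linear-algebra computations with the resolutions. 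Two small points to tighten: your final sentence has the logic slightly inverted --- it is the exactness of the reduced complex (i.e.\ the lifted syzygies) that forces $\Tor_1^A(S_A/\Psi(\tilde I_\Gamma),\kk)=0$, and then the local criterion over the Artin local ring $A$ upgrades this to flatness --- and you should note explicitly that each graded piece $(S_A/\Psi(\tilde I_\Gamma))_d$, being finitely generated flat over local $A$ with reduction $(S/I_Y)_d$, is free of rank $h_d(S/I_Y)$, which is what membership in the multigraded Hilbert functor requires.
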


\begin{proof}
Consider a minimal free resolution of $I_Y$:
\[
\xymatrix{
\dots \ar[r]&F_3\ar[r]^-{\sigma_3}&F_2\ar[r]^-{\sigma_2}&F_1\ar[r]^-{\sigma_1}&I_Y\ar[r]&0.
}
\]
Since $m>\reg(I_X)$, the minimal free resolution of $I_\Gamma=I_Y+\mathfrak m^m$ has the form
\[
\xymatrix{
\dots\ar[r]&
F_3\oplus S(-m-2)^{t_3} \ar[r]^-{\left(\begin{smallmatrix}\sigma_3&\tau_3\\ 0&\tau_3' \end{smallmatrix}\right)}&
F_2\oplus S(-m-1)^{t_2}\ar[r]^-{\left(\begin{smallmatrix}\sigma_2&\tau_2\\ 0&\tau_2' \end{smallmatrix}\right)}&
F_1\oplus S(-m)^{t_1}\ar[r]^-{\left(\begin{smallmatrix}f & g \end{smallmatrix}\right)}&
I_\Gamma \ar[r]& 0,
}
\]
for some positive integers $t_i$.
The Hilbert function of $I_\Gamma$ is given by:
\[
h_d(I_\Gamma)=
\begin{cases}
h_d(I_Y) & \text{ if } d< m\\
0 & \text{ if } d\geq m
\end{cases}.
\]

We now construct a functorial map: $\Def^{\GG_m}_{Y|\mathbb A^{n}}\to \Def^{\GG_m}_{\Gamma|\mathbb A^{n}}$.  For an Artin ring $A$, every element of $\Def^{\GG_m}_{Y|\mathbb A^{n}}(A)$ is represented by an ideal $I_Y'\subseteq A[x_1, \dots, x_n]$, and our functorial map is given by sending the deformed ideal $I_Y'$ to $I_Y'+\mathfrak m^m$.  (By a minor abuse of notation, we also use $\mathfrak m$ to refer to $\mathfrak mA)$. To check that this map is well defined, we must show that if $I_Y'$ is a $\GG_m$-invariant deformation of $I_Y$ over $A$ then  $I_Y'+\mathfrak m^m$ is a $\GG_m$-invariant deformation of $I_\Gamma$ over $A$.  By the definition of the multigraded Hilbert scheme~\cite[p. 1]{haiman-sturmfels}, this amounts to checking that for every $d\in \mathbb N$, the quotient
\[
(A\otimes S_d)/(I_Y'+\mathfrak m^m)_d
 \]
is locally free of rank $h_d(I_\Gamma)$.  This follows immediately from the observation that
\[
(A\otimes S_d)/(I_Y'+\mathfrak m^m)_d\cong 
\begin{cases}
(A\otimes S_d)/(I_Y')_d & \text{ if } d<m\\
0 & \text{ if } d\geq m
\end{cases}.
\]

To complete the proof of the proposition, it is now sufficient to prove that this map of functors induces an isomorphism on first order deformations and an injection on obstruction spaces (cf. ~\cite[Proof of Theorem~5.1]{pinkham}).
The induced map on first order deformations is given by the composition of the natural maps:
\begin{equation}\label{eqn:tangentspaces}
\Hom_S(I_Y,S/I_Y)_{0}\to \Hom_S(I_Y,S/(I_\Gamma))_{0}\to \Hom_S(I_\Gamma,S/(I_\Gamma))_{0}.
\end{equation}
An element of $\Hom_S(I_Y,S/I_Y)_{0}$ is equivalent to a degree $0$ map $\widetilde{\alpha}\colon F_1\to S/I_Y$ such that $\widetilde{\alpha}\circ \sigma_2=0$.  Each generator of $F_2$ has degree $\leq \reg(I_Y)+1$, and hence the image of $\widetilde{\alpha}\circ \sigma_2$ lands in $(S/I_Y)_{\leq \reg(I_\Gamma)+1}$.  Let $q\colon S/I_Y\to S/I_\Gamma$ be the quotient map.  Since $m\geq \reg(I_\Gamma)+2$, it follows that
$\widetilde{\alpha}\circ \sigma_2\colon F_1\to S/I_Y$ is the zero map if and only if $q\circ \widetilde{\alpha} \circ \sigma_2\colon F_1\to S/I_\Gamma$ is the zero map.  Hence, an equivariant deformation of $Y$, i.e. an element of $\Hom_S(I_Y,S/I_Y)_{0}$, is given by a degree $0$ map $\alpha\colon F_1\to S/I_\Gamma$ such that $\alpha\circ \sigma_2=0$.

An equivariant deformation of $\Gamma$, i.e. an element of $\Hom_S(I_\Gamma,S/(I_\Gamma))_{0}$, is given by a pair of degree $0$ morphisms $(\alpha, \alpha')$ where $\alpha\colon F_1\to S/I_{\Gamma}, \alpha\colon S(-m)^{t_1} \to S/I_\Gamma$ and where:
\begin{equation}\label{eqn:aa'}
\alpha\circ \sigma_2=0, \qquad \text{ and } \qquad  \alpha\circ \tau_2+\alpha'\circ \tau_2'=0.
\end{equation}
However, since $\im(\alpha')\subseteq (S/I_{\Gamma})_m=0$, it follows that $\alpha'$ is actually the zero map. Further, $\alpha\circ \tau_2$ lands in $(S/I_\Gamma)_{m}=0$.  Thus, the second condition above is trivially satisfied for any degree $0$ map.  We conclude that the composition in \eqref{eqn:tangentspaces}, which sends $\alpha \mapsto (\alpha, 0)$, is a bijection.

The induced map of obstruction spaces is given by the composition:
\begin{equation}\label{eqn:obstructionspaces}
\Ext^1_S(I_Y,S/I_Y)_{0}\to \Ext^1_S(I_Y,S/(I_\Gamma))_{0}\to \Ext^1_S(I_\Gamma,S/(I_\Gamma))_{0}.
\end{equation}
We must check injectivity.  A cycle for $\Ext^1_S(I_Y,S/I_Y)_{0}$ may be represented as a map: $\widetilde{\beta}\colon F_2\to S/I_Y$ such that $\widetilde{\beta}\circ \sigma_3=0$.  The map of obstruction spaces sends $\widetilde{\beta}$ to the cycle $(\beta,0)\colon F_2\oplus S(-m-1)^{t_1} \to S/I_\Gamma$ where $\beta:=q\circ \widetilde{\beta}$.  Since $\beta$ has degree $0$, the cycle $(\beta,0)$ is a boundary if and only if there exists some $(\alpha, 0)\colon F_1\oplus S(-m)\to S/I_\Gamma$ such that:
\[
\alpha \circ \sigma_2=\beta \qquad \text{ and }\qquad  \alpha \circ \tau_2 = 0.
\]
Again by degree considerations, the second condition is automatically satisfied.  Let $\widetilde{\alpha}$ any map $\widetilde{\alpha}\colon F_1\to S/I_Y$ such that $q\circ \widetilde{\alpha}=\alpha$.  Since the image of $\alpha \circ \sigma_2$ lands in $(S/I_{\Gamma})_{\leq \reg(I_Y)+1}$, we conclude that
\[
\alpha \circ \sigma_2=\beta \iff \widetilde{\alpha} \circ \sigma_2 = \widetilde{\beta}.
\]
This shows injectivity of the composition in \eqref{eqn:obstructionspaces}, completing the proof.
\end{proof}

We now prove Theorem~\ref{thm}.
\begin{proof}[Proof of Theorem~\ref{thm}]
As in \cite[Theorems 4.4, 4.5]{vakil}, we fix a smooth surface $X$ whose abstract deformation space has the desired singularity type $R$ and such that $\omega_X$ is very ample and $H^1(X,\cO_X)=0$. We may further choose a very ample $\cO_X(1):=\omega_X^{\otimes e}$ for some large $e\gg 0$ such that each of the following holds (see~\cite[\S6.7]{vakil}):
\begin{enumerate}
    \item[(i)] $\oplus_{\nu\in \mathbb Z} H^1(X,\mathcal O_X(\nu))=0$; 
    \item[(ii)] $\oplus_{\nu \ne 0} H^1(X,T_X(\nu))=0$;
    \item[(iii)] The embedding of $X\subseteq \PP^n$ induced by $\cO_X(1)$ is projectively normal.
\end{enumerate}
By the argument in~\cite[(4.6)]{vakil}, it follows that the embedded deformation theory of $X\subseteq \PP^{n-1}$ is smoothly equivalent to the abstract deformation theory of $X$.  Further, ~\cite[Comparison Theorem]{Piene} then implies that the embedded deformation theory of  $X\subseteq \PP^{n-1}$ is equivalent to the $\GG_m$-equivariant embedded deformation theory of the cone of $CX\subseteq \AA^n$.  Hence $\mathcal H_{CX}$ is smoothly equivalent to $R$.

We now let $I_X$ be the ideal defining $X\subseteq \PP^{n-1}$ and we set $I_\Gamma:=I_X+\mathfrak m^m$ for any $m\geq \reg(I_X)+2$.  By Proposition~\ref{prop:muisomorphism}, we conclude that $\mathcal H_{CX}\cong \mathcal H_{\Gamma}$, and hence $\mathcal H_{\Gamma}$ is smoothly equivalent to $R$.  
\end{proof}


\section{Curve singularities with a $\GG_m$-action}\label{sec:examples}
If $C$ is an affine curve $C\subseteq \mathbb A^n$ with a $\GG_m$-action and an isolated singularity, then we use $\Def^{\GG_m}_{C}$ to denote the functor of equivariant deformations of $C$.  For any singularity type $R$, we will construct an equivariant curve $C\subseteq \mathbb A^n$ such that $\mathcal H_C$ lies in the same smooth equivalence class as $R$, and such that the map of functors  $\Def^{\GG_m}_{C|\mathbb A^n}\to \Def^{\GG_m}_{C}$ is smooth.  By definition, $\mathcal H_C$ represents the functor $\Def^{\GG_m}_{C|\mathbb A^n}$, and hence this will prove the corollary.
\begin{proof}[Proof of Corollary~\ref{cor:curves}]
Fix a singularity type $R$.  As in the proof of Theorem~\ref{thm}, we choose a codimension $n-3$ ideal $I_X=(f_1, \dots, f_s)\subseteq S$ such that $\mathcal H_X$ has the same singularity type as $R$.  Set $m\geq \reg(I_X)+2$.  If $\kk$ is an infinite field, then we may define a regular sequence $g_1, g_2$ on $S/I_X$, where $g_i$ is a generic homogeneous form of degreee $m$.  If $\kk$ is finite, then the same holds for some $m\gg \reg(I_X)$.  Let $I_C:=I_X+\langle g_1,g_2\rangle$.

There is an induced map $\mathcal H_X\to \mathcal H_C$, by essentially the same argument as in Theorem~\ref{thm}. The condition of being a regular sequence on $S/I_X$ is open, and hence there is an open set in the vector space $S_m\oplus S_m$ parametrizing the regular sequences.  Thus the choice of $g_1, g_2$ is a smooth choice, and so $\mathcal H_C$ is in the same smooth equivalence class as $\mathcal H_X$.

Since $C$ is affine, we have a formally smooth map of functors $\Def_{C|\mathbb A^n}\to \Def_C$ by \cite[p. 4]{artin}.  Further, by always choosing homogeneous representatives, one can extend Artin's argument to show that the map $\Def^{\GG_m}_{C|\mathbb A^n}\to \Def^{\GG_m}_C$ is formally smooth.  Note that, since the tangent spaces of both functors are finite dimensional, the above map is actually smooth.  As $\mathcal H_C$ is in the same smooth equivalence class as $R$, this completes the proof.
\end{proof}

\section*{Acknowledgements} 
I would like to thank Ravi Vakil and Mauricio Velasco for conversations which inspired this project.  I also thank Dustin Cartwright, David Eisenbud, 
Robin Hartshorne, Brian Osserman,
and Fred van der Wyck for useful conversations.

\begin{bibdiv}
\begin{biblist}

\bib{artin}{book}{
	author={Artin, M.},
	title={Deformations of singularities},
	publisher={Tata institute of fundamental research Bombay},
	date={1976},
	}

\bib{cevv}{article}{
   author={Cartwright, Dustin A.},
   author={Erman, Daniel},
   author={Velasco, Mauricio},
   author={Viray, Bianca},
   title={Hilbert schemes of 8 points},
   journal={Algebra Number Theory},
   volume={3},
   date={2009},
   number={7},
   pages={763--795},
}


\bib{evain}{article}{
   author={Evain, Laurent},
   title={Irreducible components of the equivariant punctual Hilbert
   schemes},
   journal={Adv. Math.},
   volume={185},
   date={2004},
   number={2},
   pages={328--346},
   issn={0001-8708},
}


\bib{haiman-sturmfels}{article}{
   author={Haiman, Mark},
   author={Sturmfels, Bernd},
   title={Multigraded Hilbert schemes},
   journal={J. Algebraic Geom.},
   volume={13},
   date={2004},
   number={4},
   pages={725--769},
}

%
%
%
%

\bib{iarrobino-red}{article}{
   author={Iarrobino, A.},
   title={Reducibility of the families of $0$-dimensional schemes on a
   variety},
   journal={Invent. Math.},
   volume={15},
   date={1972},
   pages={72--77},
}

\bib{lederer}{article}{
   author={Lederer, Mathias},
   title={Gr\"obner strata in the Hilbert scheme of points},
   journal={J. Commut. Algebra},
   volume={3},
   date={2011},
   number={3},
   pages={349--404},
}

\bib{lederer2}{article}{
   author={Lederer, Mathias},
   title={Components of Gr\"obner strata in the Hilbert scheme of points},
   note={arXiv:1006.3653},
}

%
%
%

\bib{Piene}{article}{
   author={Piene, Ragni and Michael Schlessigner},
   title={On the Hilbert Scheme Compactification of the Space of Twisted Cubics},
   journal={American Journal of Mathematics},
   volume={107},
   number={4},
   publisher={Johns Hopkins University Press},
   date={1985},
   pages={761-774},
}

\bib{pinkham}{book}{
   author={Pinkham, Henry C.},
   title={Deformations of algebraic varieties with $G\sb{m}$ action},
   note={Ast\'erisque, No. 20},
   publisher={Soci\'et\'e Math\'ematique de France},
   place={Paris},
   date={1974},
   pages={i+131},
}

\bib{schlessinger}{article}{
   author={Schlessinger, Michael},
   title={Functors of Artin rings},
   journal={Trans. Amer. Math. Soc.},
   volume={130},
   date={1968},
   pages={208--222},
}

\bib{sernesi}{book}{
   author={Sernesi, Edoardo},
   title={Deformations of algebraic schemes},
   series={Grundlehren der Mathematischen Wissenschaften [Fundamental
   Principles of Mathematical Sciences]},
   volume={334},
   publisher={Springer-Verlag},
   place={Berlin},
   date={2006},
   pages={xii+339},
}

\bib{vakil}{article}{
   author={Vakil, Ravi},
   title={Murphy's law in algebraic geometry: badly-behaved deformation
   spaces},
   journal={Invent. Math.},
   volume={164},
   date={2006},
   number={3},
   pages={569--590},
}

\end{biblist}
\end{bibdiv}

\end{document}